\newtheorem{lemma}{Lemma}
\newtheorem{thm}{Theorem}
\newtheorem{corollary}{Corollary}
\newtheorem{conjecture}{Conjecture}
\title{The chromatic number of almost stable Kneser hypergraphs}
\author{Fr\'ed\'eric Meunier}
\address{Universit\'e Paris Est, LVMT, ENPC, 6-8 avenue Blaise Pascal, Cit\'e Descartes
Champs-sur-Marne, 77455 Marne-la-Vall\'ee cedex 2, France.}
\email{frederic.meunier@enpc.fr}
\begin{document}

\maketitle

\begin{abstract}
Let $V(n,k,s)$ be the set of $k$-subsets $S$ of $[n]$ such that for all $i,j\in S$, we have $|i-j|\geq s$
We define almost $s$-stable Kneser hypergraph $KG^r{{[n]}\choose k}_{ s\mbox{\tiny{\textup{-stab}}}}^{\displaystyle\sim}$ to be the $r$-uniform hypergraph whose vertex set is $V(n,k,s)$ and whose edges are the $r$-uples of disjoint elements of $V(n,k,s)$.

With the help of a $Z_p$-Tucker lemma, we prove that, for $p$ prime and  for any $n\geq kp$, the chromatic number of almost $2$-stable Kneser hypergraphs $KG^p {{[n]}\choose k}_{2\mbox{\tiny{\textup{-stab}}}}^{\displaystyle\sim}$ is equal to the chromatic number of the usual Kneser hypergraphs $KG^p{{[n]}\choose k}$, namely that it is equal to $\left\lceil\frac{n-(k-1)p}{p-1}\right\rceil.$

Defining $\mu(r)$ to be the number of prime divisors of $r$, counted with multiplicities, this result implies that 
the chromatic number of almost $2^{\mu(r)}$-stable Kneser hypergraphs $KG^r{{[n]}\choose k}_{ 2^{\mu(r)}\mbox{\tiny{\textup{-stab}}}}^{\displaystyle\sim}$ is equal to the chromatic number of the usual Kneser hypergraphs $KG^r{{[n]}\choose k}$ for any $n\geq kr$, namely that it is equal to $\left\lceil\frac{n-(k-1)r}{r-1}\right\rceil.$
\end{abstract}

\section{Introduction and main results}

Let $[a]$ denote the set $\{1,\ldots,a\}$.
The Kneser graph $KG^2{{[n]}\choose k}$ for integers $n\geq 2k$ is defined as follows: its vertex set is the set of $k$-subsets of $[n]$ and two vertices are connected by an edge if they have an empty intersection.

Kneser conjectured \cite{Kn55} in 1955 that its chromatic number $\chi\left(KG^2{{[n]}\choose k}\right)$ is equal to $n-2k+2$. It was proved to be true by Lov\'asz in 1979 in a famous paper \cite{Lo79}, which is the first and one of the most spectacular application of algebraic topology in combinatorics.

Soon after this result, Schrijver \cite{Sc78} proved that the chromatic number remains the same when we consider the subgraph $KG^2{{[n]}\choose k}_{2\mbox{\tiny{\textup{-stab}}}}$ of $KG^2{{[n]}\choose k}$ obtained by restricting the vertex set to the $k$-subsets that are {\em $2$-stable}, that is, that do not contain two consecutive elements of $[n]$ (where $1$ and $n$ are considered to be also consecutive).

Let us recall that an {\em hypergraph} $\mathcal{H}$ is a set family $\mathcal{H}\subseteq2^V$, with {\em vertex set} $V$. An hypergraph is said to be {\em $r$-uniform} if all its {\em edges} $S\in\mathcal{H}$ have the same cardinality $r$. A {\em proper coloring with $t$ colors} of $\mathcal{H}$ is a map $c:V\rightarrow[t]$ such that there is no monochromatic edge, that is such that in each edge there are two vertices $i$ and $j$ with $c(i)\neq c(j)$. The smallest number $t$ such that there exists such a proper coloring is called {\em the chromatic number} of $\mathcal{H}$ and denoted by $\chi(\mathcal{H})$.
 
In 1986, solving a conjecture of Erd\H{o}s \cite{Er76}, Alon, Frankl and Lov\'asz \cite{AlFrLo86} found the chromatic number of {\em Kneser hypergraphs}. The Kneser hypergraph $KG^r{{[n]}\choose k}$ is a $r$-uniform hypergraph which has the $k$-subsets of $[n]$ as vertex set and whose edges are formed by the $r$-uple of disjoint $k$-subsets of $[n]$. Let $n,k,r,t$ be positive integers such that $n\geq(t-1)(r-1)+rk$. Then $\chi\left(KG^r{{[n]}\choose k}\right)>t$. Combined with a lemma by Erd\H{o}s giving an explicit proper coloring, it implies that $\chi\left(KG^r{{[n]}\choose k}\right)=\left\lceil\frac{n-(k-1)r}{r-1}\right\rceil$. The proof found by Alon, Frankl and Lov\'asz used tools from algebraic topology.

In 2001, Ziegler gave a combinatorial proof of this theorem \cite{Zi02}, which makes no use of homology, simplicial approximation,... He was inspired by a combinatorial proof of the Lov\'asz theorem found by Matou\v{s}ek \cite{Ma04}. 
A subset $S\subseteq [n]$ is {\em $s$-stable} if any two of its elements are at least ``at distance $s$ apart'' on the
$n$-cycle, that is, if $s\leq |i - j| \leq n-s$ for distinct $i, j\in S$. Define then $KG^r{{[n]}\choose k}_{\mbox{\tiny{$s$-stab}}}$ as the hypergraph obtained by restricting the vertex set of $KG^r{{[n]}\choose k}$ to the $s$-stable $k$-subsets.
At the end of his paper, Ziegler made the supposition that the chromatic number of $KG^r{{[n]}\choose k}_{\mbox{\tiny{$r$-stab}}}$ is equal to the chromatic number of $KG^r{{[n]}\choose k}$ for any $n\geq kr$.
This supposition generalizes both Schrijver's theorem and the Alon-Frankl-Lov\'asz theorem. Alon, Drewnowski and \L ucsak make this supposition an explicit conjecture in \cite{AlDrLu09}.

\begin{conjecture}\label{conj:zie}
Let $n,k,r$ be non-negative integers such that $n\geq rk$. Then $$\chi\left(KG^r{{[n]}\choose k}_{\mbox{\tiny{$r$-stab}}}\right)=\left\lceil\frac{n-(k-1)r}{r-1}\right\rceil.$$
\end{conjecture}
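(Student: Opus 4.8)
The plan is to treat the upper and lower bounds separately. The upper bound is immediate: $KG^r\binom{[n]}{k}_{r\text{-stab}}$ is a subhypergraph of $KG^r\binom{[n]}{k}$, since it uses the same edge-forming rule on a restricted vertex set, so any proper coloring of the full Kneser hypergraph restricts to a proper coloring of the $r$-stable one. With the Alon--Frankl--Lov\'asz theorem this gives $\chi\bigl(KG^r\binom{[n]}{k}_{r\text{-stab}}\bigr)\le\chi\bigl(KG^r\binom{[n]}{k}\bigr)=\bigl\lceil\frac{n-(k-1)r}{r-1}\bigr\rceil$. The whole content is the matching lower bound, which I would prove by showing that $\chi>t$ whenever $n\ge(t-1)(r-1)+rk$, i.e. that every map $c$ from the $r$-stable $k$-subsets to $[t]$ has a monochromatic edge; it is enough to work at the extremal value $n=(t-1)(r-1)+rk$, and I would first settle the prime case $r=p$, where the topological machinery is sharpest.

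For $r=p$ prime I would encode $c$ as a $\mathbb{Z}_p$-equivariant labeling $\lambda=(\lambda_1,\lambda_2)\colon(\mathbb{Z}_p\cup\{0\})^n\setminus\{\mathbf 0\}\to\mathbb{Z}_p\times[m]$ and apply a $\mathbb{Z}_p$-Tucker lemma. A point $x$ is read as $p$ disjoint sets $X_1,\dots,X_p\subseteq[n]$ (the coordinates carrying each group value), arranged on the $n$-cycle. If some $X_\omega$ contains a $p$-stable $k$-subset $S$, I place $x$ in the \emph{high} region: $\lambda_2(x)=(k-1)+c(S)>\alpha$ and $\lambda_1(x)=\omega$, with $\alpha:=k-1$; choosing $S$ canonically keeps this well defined and makes the monotonicity conditions hold. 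Otherwise $x$ lies in the \emph{low} region, where $\lambda_2(x)\in\{1,\dots,k-1\}$ records a cyclic alternation statistic of the signed vector and $\lambda_1(x)$ records the group value realizing it. With $m=(k-1)+t$, the Tucker inequality $n\le\alpha+(m-\alpha)(p-1)=(k-1)+t(p-1)$ is violated at the threshold, since $n=(t-1)(p-1)+pk=t(p-1)+(p-1)(k-1)+k>(k-1)+t(p-1)$. The lemma then forces a chain producing $p$ of the $X_\omega$'s, pairwise disjoint, each containing a $p$-stable $k$-subset of one common color: a monochromatic edge.

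The main obstacle lies in the low region. For the ordinary Kneser hypergraph one declares the high region as soon as some $X_\omega$ reaches $k$ elements, and the low-region statistic $\max_\omega|X_\omega|\le k-1$ is then automatically bounded by $\alpha=k-1$. Under $p$-stability an $X_\omega$ may be large yet contain no $p$-stable $k$-subset (its elements clustering on short arcs of the cycle), so it must remain in the low region; the delicate point is to define the alternation statistic so that every signed vector with no $p$-stable $k$-subset still has at most $k-1$ alternating blocks, preserving $\alpha=k-1$. This is precisely the combinatorial lemma that makes Schrijver's theorem work for $s=2$; extending it to genuine $p$-stability is the heart of the difficulty. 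I would attempt it by cutting the $n$-cycle into maximal arcs on which the signs alternate and bounding their number, relating a $p$-stable selection to a system of distinct representatives across the blocks, but controlling this count uniformly over all signed $p$-stable-free vectors is where I expect the argument to be hardest and most likely to require a new idea.

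Finally, for composite $r$ the prime-based machinery does not transfer directly: $\mathbb{Z}_r$ does not act freely in the way the topological lower bound requires, and it is exactly this that forces the weaker $2^{\mu(r)}$-stability obtained by iterating the prime argument over the prime factors of $r$ and losing a factor of $2$ in the stability at each step. Closing the gap from $2^{\mu(r)}$-stability to full $r$-stability for composite $r$ would need either a $\mathbb{Z}_r$-Tucker lemma with the strength available only for primes, or a combinatorial reduction that preserves $r$-stability while replacing $\mathbb{Z}_r$ by its prime factors; I regard this as the second, and more serious, obstacle to the full conjecture.
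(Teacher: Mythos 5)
You are attempting to prove a statement that the paper itself does not prove: this is Conjecture \ref{conj:zie}, stated as an open problem (due to Ziegler, made explicit by Alon, Drewnowski and \L uczak). The paper proves only the strictly weaker Theorem \ref{main} --- the lower bound for the \emph{almost $2$-stable} hypergraph $KG^p\binom{[n]}{k}_{2\text{-stab}}^{\sim}$, $p$ prime --- and extends it to composite $r$ at the cost of weakening stability to $2^{\mu(r)}$ via the multiplicativity Lemma \ref{lem:gen}. So there is no proof in the paper to match your attempt against, and your attempt, as you candidly acknowledge, is not complete either: it is a plan with two named obstacles, both of which are genuine and are exactly the ones the author runs into.

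Concretely, the gap sits where you place it: the low region. In the paper's scheme one takes $\lambda_2(X)=\mathrm{alt}(X)$ with $\alpha=p(k-1)$ (not your $\alpha=k-1$, though that difference is cosmetic), and the pigeonhole step when $\mathrm{alt}(X)\geq p(k-1)+1$ only certifies that some $X_j$ contains a $k$-subset whose consecutive occurrences in the alternating subsequence are separated by at least one other nonzero entry --- i.e.\ pairwise distances $\geq 2$, with no cyclic separation of $1$ and $n$, and nothing close to distance $\geq p$. That is precisely why the theorem obtained is about \emph{almost $2$-stable} sets rather than $p$-stable ones. To certify a $p$-stable subset one would need an alternation notion in which any $p$ consecutive terms are pairwise distinct; this is exactly the strengthening floated in the paper's concluding remarks, where the author reports that all attempts along these lines failed (the difficulty being to keep the statistic compatible with the monotonicity and equivariance hypotheses of the $Z_p$-Tucker lemma under $X^{(1)}\subseteq X^{(2)}$). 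Your arc-decomposition/SDR idea is an attempt at this same missing combinatorial lemma, and you rightly flag it as the crux rather than claiming it. Your account of the composite case matches the paper's actual mechanism: Lemma \ref{lem:gen} composes the prime cases with $s=s_1s_2$, losing a factor $2$ in stability per prime divisor, which is the source of the $2^{\mu(r)}$ in Corollary \ref{cor:gen}; the only fully resolved composite cases at the time are $r=2^a$ (Alon--Drewnowski--\L uczak). Verdict: your upper bound paragraph is fine (restriction plus the Erd\H{o}s coloring), but the lower bound for genuine $r$-stability is not proved here, nor in the paper --- the key lemma you identify is precisely what is missing.
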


We prove a weaker form of this statement, but which strengthes the Alon-Frankl-Lov\'asz theorem. 
Let $V(n,k,s)$ be the set of $k$-subsets $S$ of $[n]$ such that for all $i,j\in S$, we have $|i-j|\geq s$
We define the almost $s$-stable Kneser hypergraphs $KG^r{{[n]}\choose k}_{ s\mbox{\tiny{\textup{-stab}}}}^{\displaystyle\sim}$ to be the $r$-uniform hypergraph whose vertex set is $V(n,k,s)$ and whose edges are the $r$-uples of disjoint elements of $V(n,k,s)$.

\begin{thm}\label{main}
Let $p$ be a prime number and $n,k$ be non negative integers such that $n\geq pk$. We have $$\chi\left(KG^p{{[n]}\choose k}_{ 2\mbox{\tiny{\textup{-stab}}}}^{\displaystyle\sim}\right)\geq \left\lceil\frac{n-(k-1)p}{p-1}\right\rceil.$$
\end{thm}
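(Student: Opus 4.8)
The plan is to prove the stated inequality in contrapositive form through a $\mathbb{Z}_p$-Tucker argument. Write $C=\left\lceil\frac{n-(k-1)p}{p-1}\right\rceil$ and suppose, for contradiction, that $c$ is a proper coloring of $KG^p{[n]\choose k}^{\sim}_{2\text{-stab}}$ using only the color set $\{1,\dots,C-1\}$. I would feed this coloring into the $\mathbb{Z}_p$-Tucker lemma announced in the abstract, with the parameters $\alpha=(k-1)p$ and $m=(k-1)p+(C-1)$. The reason for these values is the numerical identity $\alpha+(p-1)(m-\alpha)=(k-1)p+(p-1)(C-1)<n$, which holds precisely because the ceiling gives $C-1<\frac{n-(k-1)p}{p-1}$; this strict inequality is exactly the regime in which the Tucker lemma forces the configuration that will produce a monochromatic edge.

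For the labelling, write $x^\omega=\{i:x_i=\omega\}$ for $x\in(\mathbb{Z}_p\cup\{0\})^n\setminus\{\mathbf 0\}$ and $\omega\in\mathbb{Z}_p$; the classes $x^\omega$ are pairwise disjoint, and $x\preceq y$ will mean $x^\omega\subseteq y^\omega$ for every $\omega$. Let $g(A)$ denote the largest size of an almost $2$-stable subset of $A\subseteq[n]$. If some class $x^\omega$ contains an almost $2$-stable $k$-subset, I am in the high regime: I select, by a rule depending only on the unordered family $\{x^\omega\}_\omega$ (for instance the lexicographically smallest almost $2$-stable $k$-subset lying inside a single class), a canonical vertex $S^\ast$. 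Since the classes are disjoint, $S^\ast$ sits in a unique class $x^{\omega^\ast}$, and I set $\lambda(x)=\big((k-1)p+c(S^\ast),\,\omega^\ast\big)$. Shifting $x$ by $\nu\in\mathbb{Z}_p$ permutes the classes without altering the underlying family of subsets, so $S^\ast$ and its color are unchanged while the containing class becomes $x^{\omega^\ast+\nu}$; thus $\lambda$ is $\mathbb{Z}_p$-equivariant here, and its value lies in $\{\alpha+1,\dots,m\}$.

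If no class contains an almost $2$-stable $k$-subset, I am in the low regime and set $\lambda(x)=\big(\sum_{\omega\in\mathbb{Z}_p}g(x^\omega),\,1\big)$ with trivial sign. This is the heart of the argument and the only place where $2$-stability is genuinely exploited: by definition of the low regime each $g(x^\omega)\le k-1$, so the value lies in $\{1,\dots,(k-1)p\}$ and fits into the $\alpha$ neutral levels. A naive choice such as the support size $\sum_\omega|x^\omega|$ would only be bounded by $2(k-1)p$, since a set can contain up to twice as many integers as the size of its largest almost $2$-stable subset, and that would destroy the identity $\alpha+(p-1)(m-\alpha)<n$; replacing cardinality by the stable capacity $g$ is exactly what recovers the missing factor of two. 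Because $\sum_\omega g(x^\omega)$ is symmetric in the classes it is $\mathbb{Z}_p$-invariant, so the low regime is equivariant with constant sign and the low chain condition of the Tucker lemma holds automatically.

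With $\lambda$ constructed, the $\mathbb{Z}_p$-Tucker lemma together with $\alpha+(p-1)(m-\alpha)<n$ yields a chain $x^{(1)}\preceq\cdots\preceq x^{(p)}$ having a common high value $(k-1)p+c$ and pairwise distinct signs $\omega_1,\dots,\omega_p$; since $p$ is prime these signs exhaust $\mathbb{Z}_p$. For each $j$ the canonical vertex satisfies $S^{\ast(j)}\subseteq (x^{(j)})^{\omega_j}\subseteq (x^{(p)})^{\omega_j}$ and $c(S^{\ast(j)})=c$, and as the classes $(x^{(p)})^{\omega_1},\dots,(x^{(p)})^{\omega_p}$ are pairwise disjoint, the vertices $S^{\ast(1)},\dots,S^{\ast(p)}$ are $p$ pairwise disjoint almost $2$-stable $k$-subsets of the same color, that is, a monochromatic edge. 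This contradicts the properness of $c$, so no $(C-1)$-coloring exists and $\chi\ge C$. I expect the main obstacle to be the precise statement and verification of the $\mathbb{Z}_p$-Tucker lemma and, within the application, checking that the capacity-based low label and the equivariant canonical choice $S^\ast$ jointly meet all of its hypotheses; primality of $p$ is essential both in that lemma and in the step forcing the $p$ distinct signs to fill out $\mathbb{Z}_p$.
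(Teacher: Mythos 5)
Your overall architecture (the $Z_p$-Tucker lemma with $\alpha=(k-1)p$ and $m=\alpha+(C-1)$, high labels built from a canonically chosen almost $2$-stable $k$-subset, and the extraction of $p$ disjoint same-colored vertices from a chain with pairwise distinct signs) is essentially the paper's, and your numerics are correct. The genuine gap is in your low regime. You assign a \emph{constant} sign and assert that ``the low regime is equivariant with constant sign''; that assertion is false. Equivariance of $\lambda$ means $\lambda_1(\nu\cdot x)=\nu\,\lambda_1(x)$ \emph{and} $\lambda_2(\nu\cdot x)=\lambda_2(x)$ for every $\nu\in Z_p$: the sign coordinate must transform under the free $Z_p$-action on the target $Z_p\times[m]$, not stay fixed. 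The $Z_p$-invariance of $\sum_{\omega}g(x^\omega)$ gives you only the second identity. A constant sign violates the first, so your map does not satisfy the hypotheses of the lemma, and this hypothesis cannot be waived: without equivariance the lemma is simply false --- the constant map $x\mapsto(\omega^p,1)$ satisfies both chain conditions with $\alpha=m=1$ and would yield $1\geq n$. Dold's theorem, which drives the lemma, needs a genuine equivariant map into a free $Z_p$-space.

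The repair is to take as low sign the class containing the smallest element of the support of $x$ (this is equivariant, since the shift permutes the classes), and then the first condition of the lemma is no longer ``automatic'' --- it is exactly where the work, and the role of almost $2$-stability, sits. It does hold for your label $\sum_\omega g(x^\omega)$: if $x\preceq y$ and the sums agree, monotonicity of $g$ under inclusion forces $g(x^\omega)=g(y^\omega)$ for every $\omega$; if the smallest support element $i'$ of $y$ lay in a class $\omega'$ different from the class of the smallest support element $a$ of $x$, then every element of $x^{\omega'}$ would be $\geq a+1\geq i'+2$, so adjoining $i'$ to a maximum almost $2$-stable subset of $x^{\omega'}$ would remain almost $2$-stable and give $g(y^{\omega'})\geq g(x^{\omega'})+1$, contradicting the equality. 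With this sign rule and this verification your proof closes, and it is then a mild variant of the paper's, which does the same job with $\lambda_2=\mbox{alt}(X)$ and sign equal to the first nonzero term. (Also, primality of $p$ is irrelevant to the step ``$p$ pairwise distinct signs exhaust $Z_p$''; it is needed only for the lemma itself.)
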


Combined with the lemma by Erd\H{o}s, we get that $$\chi\left(KG^p{{[n]}\choose k}_{ 2\mbox{\tiny{\textup{-stab}}}}^{\displaystyle\sim}\right)= \left\lceil\frac{n-(k-1)p}{p-1}\right\rceil.$$

Moreover, we will see that it is then possible to derive the following corollary. Denote by $\mu(r)$ the number of prime divisors of $r$ counted with multiplicities. For instance, $\mu(6)=2$ and $\mu(12)=3$. We have

\begin{corollary}\label{cor:gen}
Let $n,k,r$ be non-negative integers such that $n\geq rk$. We have $$KG^r{{[n]}\choose k}_{ 2^{\mu(r)}\mbox{\tiny{\textup{-stab}}}}^{\displaystyle\sim}=\left\lceil\frac{n-(k-1)r}{r-1}\right\rceil.$$
\end{corollary}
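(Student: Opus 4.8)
The plan is to prove the stated equality by bounding the chromatic number above and below by $L(n,k,r):=\left\lceil\frac{n-(k-1)r}{r-1}\right\rceil$. The upper bound is immediate, since $KG^{r}{\binom{[n]}{k}}^{\sim}_{2^{\mu(r)}\text{-stab}}$ is a subhypergraph of the usual Kneser hypergraph $KG^{r}\binom{[n]}{k}$: each of its vertices is a $k$-subset of $[n]$ and each of its edges is an $r$-tuple of pairwise disjoint such subsets, hence an edge of $KG^{r}\binom{[n]}{k}$ as well. A proper coloring of $KG^{r}\binom{[n]}{k}$ therefore restricts to a proper coloring of the subhypergraph, and since the explicit coloring of Erd\H{o}s \cite{Er76} entering the Alon--Frankl--Lov\'asz theorem \cite{AlFrLo86} gives $\chi\big(KG^{r}\binom{[n]}{k}\big)=L(n,k,r)$ for $n\geq rk$, the chromatic number we want is at most $L(n,k,r)$.

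For the lower bound I would induct on $\mu(r)$, the statement to be proved being $\chi\big(KG^{q}{\binom{[N]}{k}}^{\sim}_{2^{\mu(q)}\text{-stab}}\big)\geq L(N,k,q)$ for every $k$ and every admissible $N\geq qk$. When $\mu(r)=1$ the integer $r$ is a prime $p$ and $2^{\mu(r)}=2$, so the bound is exactly Theorem~\ref{main}. For the inductive step, factor $r=pq$ with $p$ prime and $\mu(q)=\mu(r)-1$, so that $2^{\mu(r)}=2\cdot 2^{\mu(q)}$, and assume the lower bound for uniformity $q$ (and all ground sets). The crux is a reduction lemma that peels off the prime factor $p$ at the price of halving the stability: from any proper coloring of $KG^{pq}{\binom{[n]}{k}}^{\sim}_{2\cdot 2^{\mu(q)}\text{-stab}}$ with $t$ colors it produces a proper coloring of $KG^{q}{\binom{[n']}{k}}^{\sim}_{2^{\mu(q)}\text{-stab}}$ with $t$ colors, where $n'\geq qk$ is chosen so that $L(n',k,q)=L(n,k,r)$ (such an $n'$ exists because $L(\cdot,k,q)$ is nondecreasing and attains every integer value from some point on). Granting this, the induction closes immediately: taking $t=\chi$ of the $r$-uniform hypergraph yields a proper $t$-coloring of the $q$-uniform one on $[n']$, whence $t\geq L(n',k,q)=L(n,k,r)$ by the inductive hypothesis.

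The reduction lemma is the real obstacle. Two things must happen at once and compatibly: the uniformity must drop from $pq$ to $q$, and the stability from $2\cdot 2^{\mu(q)}$ to $2^{\mu(q)}$, all while transferring properness. I would arrange the stability halving through an interleaving (doubling) map $\psi$ sending an $s$-stable $k$-subset of $[n']$ to a $2s$-stable $k$-subset of $[n]$ (roughly, spreading the chosen positions so that consecutive gaps double), preserving disjointness; here $s=2^{\mu(q)}$. The drop in uniformity is where the primality of $p$ enters: given $q$ pairwise disjoint halved-stability sets upstairs, one forms their $p$ images under a suitable $\mathbb{Z}_p$-action (cyclic shifts on the enlarged ground set $[n]$), producing $pq$ pairwise disjoint $2s$-stable sets, i.e. a genuine edge of the big hypergraph. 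The color of an upstairs vertex $S$ is then read off from $c(\psi(S))$, and one must prove that a monochromatic $q$-edge upstairs forces a monochromatic $pq$-edge downstairs, contradicting properness of $c$. Making this implication genuinely hold, rather than merely be consistent with properness, is the delicate point; it is exactly where primality of $p$ is used, and it is what forces the factor $2$ per peeled prime and hence the exponent $\mu(r)$ in the statement.

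The remaining difficulties are arithmetic and I expect them to be routine: pinning down $n'$ so that the ceilings $L(n',k,q)$ and $L(n,k,r)$ coincide on the nose, and verifying that the reduced instance stays in the admissible range $n'\geq qk$ so that the inductive hypothesis applies. The genuinely hard step, on which I would spend the most care, is the $\mathbb{Z}_p$-forcing inside the reduction lemma; once it is in place, the upper bound, the base case (Theorem~\ref{main}), and the bookkeeping assemble into the claimed equality.
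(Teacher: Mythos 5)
Your upper bound and the overall induction scaffolding (base case $\mu(r)=1$ via Theorem~\ref{main}, peel off one prime per step, arrange the ground-set size so the ceilings match) are fine, but the core of your argument --- the ``reduction lemma'' --- has a genuine gap, and the mechanism you propose for it cannot work. You want to conclude that a monochromatic $q$-edge upstairs forces a monochromatic $pq$-edge downstairs by taking the $p$ cyclic shifts of each of the $q$ disjoint sets $\psi(S_1),\ldots,\psi(S_q)$. But the coloring $c$ downstairs is an \emph{arbitrary} proper coloring; it has no $Z_p$-symmetry whatsoever, so the shifted copies of $\psi(S_i)$ need not receive the color $c(\psi(S_i))$. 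A monochromatic $q$-edge upstairs therefore yields only $q$ disjoint sets of equal color downstairs, which does not contradict properness of the $pq$-uniform hypergraph. The equivariance that powers Theorem~\ref{main} lives at the level of the map $\lambda$ into $Z_p\times[m]$ inside the $Z_p$-Tucker lemma, not at the level of the coloring, and it cannot be transplanted onto $c$. For an arbitrary coloring, the only way to manufacture many disjoint sets of the same color is a pigeonhole/chromatic-number argument, not a group action.

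The paper's actual route (Lemma~\ref{lem:gen}) repairs exactly this by applying chromatic lower bounds \emph{twice}, once per factor, instead of once plus symmetry. Writing $r=r_1r_2$, $s=s_1s_2$, one takes as ``upstairs'' vertices not $k$-subsets but large sets $A\in V(n,n_1,s_1)$ with $n_1=r_1k+(t-1)(r_1-1)$. Inside each such $A$, relabeling its elements as $[n_1]$, an $s_2$-stable $k$-subset of indices maps to an $s_1s_2$-stable $k$-subset of $[n]$ (this is where stability multiplies, replacing your doubling map $\psi$); since $n_1$ is chosen so that the $r_1$-uniform almost $s_2$-stable Kneser hypergraph on $[n_1]$ has chromatic number exceeding $t$, each $A$ contains $r_1$ disjoint monochromatic sets, whose common color defines $\tilde h(A)$. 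Then $\tilde h$ is a $t$-coloring of the $r_2$-uniform almost $s_1$-stable Kneser hypergraph on $n_1$-subsets of $[n]$, and since $n\geq(t-1)(r_2-1)+r_2n_1$, that hypergraph's chromatic number also exceeds $t$, producing $r_2$ disjoint sets $A_i$ with equal $\tilde h$-color, hence $r_1r_2$ disjoint monochromatic $k$-sets and the desired contradiction. Note that primality plays no role in this composition step at all --- it is confined to Theorem~\ref{main} --- whereas your sketch tries to reuse primality in the reduction, which is precisely the part that fails. To fix your proof you would need to replace your $k$-subset-level reduction by this two-level pigeonhole structure; as written, the key implication is not established and, with cyclic shifts, not establishable.
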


\section{Notations and tools}

$Z_p=\{\omega,\omega^2,\ldots,\omega^p\}$ is the cyclic group of order $p$, with generator $\omega$. 

We write $\sigma^{n-1}$ for the $(n-1)$-dimensional simplex with vertex set $[n]$ and by $\sigma_{k-1}^{n-1}$ the $(k-1)$-skeleton of this simplex, that is the set of faces of $\sigma^{n-1}$ having $k$ or less vertices.

If $A$ and $B$ are two sets, we write $A\uplus B$ for the set $(A\times\{1\})\cup(B\times\{2\})$. For two simplicial complexes, $\mathsf{K}$ and $\mathsf{L}$, with vertex sets $V(\mathsf{K})$ and $V(\mathsf{L})$, we denote by $\mathsf{K}*\mathsf{L}$ the {\em join} of these two complexes, which is the simplicial complex having $V(\mathsf{K})\uplus V(\mathsf{L})$ as vertex set and $$\{F\uplus G:\,F\in\mathsf{K},G\in\mathsf{L}\}$$ as set of faces.
We define also $\mathsf{K}^{*n}$ to be the join of $n$ disjoint copies of $\mathsf{K}$.

Let $X=(x_1,\ldots,x_n)\in(Z_p\cup\{0\})^{n}$. We denote by $\mbox{alt}(X)$ the size of the longest alternating subsequence of non-zero terms in $X$. A sequence $(j_1,j_2,\ldots,j_m)$ of elements of $Z_p$ is said to be {\em alternating} if any two consecutive terms are different. For instance (assume $p=5$) $\mbox{alt}(\omega^2,\omega^3,0,\omega^3,\omega^5,0,0,\omega^2)=4$ and $\mbox{alt}(\omega^1,\omega^4,\omega^4,\omega^4,0,0,\omega^4)=2$.

Any element element $X=(x_1,\ldots,x_n)\in(Z_p\cup\{0\})^{n}$ can alternatively and without further mention be denoted by a $p$-uple $(X_1,\ldots,X_p)$ where $X_j:=\{i\in[n]:\,x_i=\omega^j\}$. Note that the $X_j$ are then necessarily disjoint. For two elements $X,Y\in(Z_p\cup\{0\})^{n}$, we denote by $X\subseteq Y$ the fact that for all $j\in[p]$ we have $X_j\subseteq Y_j$. When $X\subseteq Y$, note that the sequence of non-zero terms in $(x_1,\ldots,x_n)$ is a subsequence of $(y_1,\ldots,y_n)$.

The proof of Theorem  \ref{main} makes use of a variant of the $Z_p$-Tucker lemma by Ziegler \cite{Zi02}.
\begin{lemma}[$Z_p$-Tucker lemma]\label{lem:zptucker} Let $p$ be a prime, $n,m\geq 1$, $\alpha\leq m$ and let
$$\begin{array}{lccc}\lambda:
& (Z_p\cup\{0\})^{n}\setminus\{(0,\ldots,0)\} &
\longrightarrow & Z_p\times [m] \\
& X & \longmapsto & (\lambda_1(X),\lambda_2(X))
\end{array}$$ be a
$Z_p$-equivariant map satisfying the following properties:
\begin{itemize}
\item for all $X^{(1)}\subseteq X^{(2)}\in(Z_p\cup\{0\})^{n}\setminus\{(0,\ldots,0)\}$, if $\lambda_2(X^{(1)})=\lambda_2(X^{(2)})\leq\alpha$, then $\lambda_1(X^{(1)})=\lambda_1(X^{(2)})$;
\item
for all
$X^{(1)}\subseteq X^{(2)}\subseteq \ldots \subseteq X^{(p)}\in(Z_p\cup\{0\})^{n}\setminus\{(0,\ldots,0)\}$, if $\lambda_2(X^{(1)})=\lambda_2(X^{(2)})=\ldots=\lambda_2(X^{(p)})\geq\alpha+1$, then the $\lambda_1(X^{(i)})$ are not pairwise distinct for $i=1,\ldots,p$. 
\end{itemize}

Then $\alpha+(m-\alpha)(p-1)\geq n$.
\end{lemma}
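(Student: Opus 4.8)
\emph{Proof proposal.} The plan is to argue by contradiction. Suppose $n>\alpha+(m-\alpha)(p-1)$ and set $N:=\alpha+(m-\alpha)(p-1)$, so that $N\le n-1$. I would then use the two hypotheses on $\lambda$ to manufacture a $Z_p$-equivariant simplicial map from a highly connected free $Z_p$-complex of dimension $n-1$ into a free $Z_p$-complex of dimension only $N-1\le n-2$, which is impossible. The source complex is carried by the domain itself: the elements of $(Z_p\cup\{0\})^{n}\setminus\{(0,\ldots,0)\}$ are exactly the faces of the join of $n$ copies of the discrete set of $p$ points. This is a free $Z_p$-complex which, each join factor being nonempty, is $(n-2)$-connected. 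Since $p$ is prime, Dold's theorem applies: there is no $Z_p$-equivariant map from an $(n-2)$-connected free $Z_p$-complex to a free $Z_p$-complex of dimension at most $n-2$. The target I build will have dimension $N-1\le n-2$, so producing such a map yields the contradiction and forces $\alpha+(m-\alpha)(p-1)\ge n$.

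The target will be the analogous complex $(Z_p\cup\{0\})^{N}\setminus\{(0,\ldots,0)\}$, whose $N$ coordinates I organize into $m$ consecutive blocks $B_1,\ldots,B_m$, with $|B_\ell|=1$ for $\ell\le\alpha$ and $|B_\ell|=p-1$ for $\ell>\alpha$. The map $\lambda'$ will send $X$ to the configuration obtained by superimposing, over a suitable chain of sub-configurations $Y\subseteq X$, one signed coordinate placed in the block $B_{\lambda_2(Y)}$ indexed by the label of $Y$ and carrying the sign $\lambda_1(Y)$, the slot inside the block being dictated by the alternation pattern of $X$ read through the function $\mathrm{alt}$. The two hypotheses are precisely what make this fit. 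For a label $\ell\le\alpha$ the first property forces all contributions along the chain to carry one and the same sign, so the single coordinate of $B_\ell$ suffices and is unambiguous. For a label $\ell>\alpha$ the second property guarantees that along any chain the signs attached to $\ell$ are never pairwise distinct, hence (as $|Z_p|=p$) at most $p-1$ distinct signs occur, so the $p-1$ coordinates of $B_\ell$ are enough. Equivariance of $\lambda'$ is inherited from that of $\lambda$, since cyclically shifting $X$ shifts every $\lambda_1(Y)$, and hence every deposited sign, by $\omega$.

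Monotonicity of $\lambda'$ with respect to $\subseteq$ — so that it is genuinely simplicial — will hold once the chain and the slot assignment are chosen to be compatible under enlargement of $X$, and nonvanishing holds because $X$ itself contributes at least one nonzero coordinate. Granting these, $\lambda'$ is a $Z_p$-equivariant simplicial map into a free complex of dimension $N-1\le n-2$, contradicting Dold's theorem and thereby proving the inequality.

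I expect the genuine obstacle to be the precise definition of $\lambda'$ on each block and the verification that it is simultaneously well defined (no coordinate ever receives two different signs), simplicial, and within the prescribed block sizes. This is where the interplay of $\mathrm{alt}$ with the two hypotheses must be handled with care: one must pin down, for each large label, exactly which of the $p-1$ slots a sign occupies as $X$ grows, and check that the ``at most $p-1$ distinct signs'' bound coming from the second hypothesis is never violated and stays coherent across comparable configurations, while primality of $p$ is what ultimately licenses the appeal to Dold's theorem. An alternative to Dold's theorem would be to run Ziegler's direct combinatorial counting argument (a $Z_p$-analogue of Ky Fan's lemma) on the same labelled complex; it yields the same inequality without algebraic topology, at the price of a more delicate parity bookkeeping, and the two hypotheses enter in exactly the same two roles.
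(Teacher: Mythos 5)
Your high-level frame is the same as the paper's (Dold's theorem, comparing the connectivity $n-2$ of $Z_p^{*n}$ with the dimension of an auxiliary free $Z_p$-complex of dimension $\alpha+(m-\alpha)(p-1)-1$, with the two hypotheses accounting for the two kinds of factors), but the step you defer to the end --- the actual construction of $\lambda'$ --- is not a verification that can be ``handled with care''; in the form you describe, it cannot be done at all for $p\geq 3$. Your target is $Z_p^{*N}$ with $N=\alpha+(m-\alpha)(p-1)$, so in a face of the target each block $B_\ell$, $\ell>\alpha$, can hold at most $p-1$ signed coordinates, one sign per slot. Your scheme deposits, for every configuration $Y$, the sign $\lambda_1(Y)$ into block $B_{\lambda_2(Y)}$ of $\lambda'(Y)$. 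Monotonicity then forces $\lambda'(X)\supseteq\lambda'(Y)$ for \emph{every} $Y\subseteq X$, so block $B_\ell$ of $\lambda'(X)$ must contain the deposits of all subconfigurations of $X$ carrying label $\ell$ --- not merely those along one chain. But the second hypothesis bounds the number of distinct signs at a label only \emph{along chains}: pairwise incomparable subconfigurations of one and the same $X$ may carry all $p$ distinct signs at a common label $\ell>\alpha$ (nothing in the hypotheses forbids this), and then $B_\ell$ would need $p$ slots while it has $p-1$. If, to dodge this, you deposit only along a single ``suitable chain'' in $X$, then monotonicity fails instead, since the chain chosen for $X$ cannot contain every $Y\subseteq X$, while $\lambda'(Y)$ already contains $Y$'s own deposit. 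So the scheme is squeezed between overflow and non-monotonicity. A further warning sign: in your setup primality of $p$ plays no role ($Z_p^{*n}$ and $Z_p^{*N}$ are free $Z_p$-complexes for \emph{every} $p$), whereas the lemma's proof genuinely needs $p$ prime.

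The paper's proof removes the difficulty by changing the target rather than building a map: take the target to be $\left(Z_p^{*\alpha}\right)*\left((\sigma_{p-2}^{p-1})^{*(m-\alpha)}\right)$. Its vertex set is exactly $Z_p\times[m]$, and its faces are \emph{by definition} the subsets of $Z_p\times[m]$ using at most one sign for each label $\ell\leq\alpha$ and at most $p-1$ distinct signs for each label $\ell>\alpha$. Consequently $\lambda$ itself, viewed as a vertex map on $\mathrm{sd}(Z_p^{*n})$ (whose vertices are the nonzero $X$ and whose faces are chains $X^{(1)}\subseteq\cdots\subseteq X^{(k)}$), is already simplicial: the two hypotheses say verbatim that the image of a chain is a face of this target. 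Working on the barycentric subdivision is precisely what makes your ``only chains matter'' reading of the hypotheses legitimate, because chains are the only faces that ever need to be mapped. Equivariance is inherited, the subdivision preserves connectivity and freeness, the target is free because $p$ is prime (this is where primality enters: for composite $p$ the cyclic action on $\sigma_{p-2}^{p-1}$ has fixed points, e.g.\ barycenters of faces invariant under a proper subgroup), and its dimension is $\alpha\cdot 1+(m-\alpha)(p-1)-1$. Dold's theorem then gives $\alpha+(m-\alpha)(p-1)-1>n-2$, i.e.\ $\alpha+(m-\alpha)(p-1)\geq n$. In short: your ``I expect the genuine obstacle to be the precise definition of $\lambda'$'' is exactly right, and the obstacle is fatal to the $Z_p^{*N}$ target; the correct move is to let the hypotheses define the faces of the target complex, after which there is nothing left to construct.
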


We can alternatively say that $X\mapsto\lambda(X)=(\lambda_1(X),\lambda_2(X))$ is a $Z_p$-equivariant simplicial map from $\mbox{sd}\left(Z_p^{*n}\right)$ to $\left(Z_p^{*{\alpha}}\right)*\left((\sigma_{p-2}^{p-1})^{*(m-\alpha)}\right)$, where $\mbox{sd}(\mathsf{K})$ denotes the fist barycentric subdivision of a simplicial complex $\mathsf{K}$.

\begin{proof}[Proof of the $Z_p$-Tucker lemma]
According to Dold's theorem \cite{Do83,Ma03}, if such a map $\lambda$ exists, the dimension of $\left(Z_p^{*{\alpha}}\right)*\left((\sigma_{p-2}^{p-1})^{*(m-\alpha)}\right)$ is strictly larger than the connectivity of $Z_p^{*n}$, that is $\alpha+(m-\alpha)(p-1)-1>n-2$.
\end{proof}

It is also possible to give a purely combinatorial proof of this lemma through the generalized Ky Fan theorem from \cite{HaSaScZi08}.

\section{Proof of the main results}

\begin{proof}[Proof of Theorem \ref{main}]
We follow the scheme used by Ziegler in \cite{Zi02}. We endow $2^{[n]}$ with an arbitrary linear order $\preceq$.

Assume that $KG^p{{[n]}\choose k}_{2\mbox{\tiny{\textup{-stab}}}}^{\displaystyle\sim}$ is properly colored with $C$ colors $\{1,\ldots, C\}$. For $S\in V(n,k,2)$, we denote by $c(S)$ its color. Let $\alpha=p(k-1)$ and $m=p(k-1)+C$.

Let $X=(x_1,\ldots,x_n)\in(Z_p\cup\{0\})^{n}\setminus\{(0,\ldots,0)\}$. 
We can write alternatively $X=(X_1,\ldots,X_p)$.

\begin{itemize}
\item if $\mbox{alt}(X)\leq p(k-1)$, let $j$ be the index of the $X_j$ containing the smallest integer ($\omega^j$ is then the first non-zero term in $(x_1,\ldots,x_n)$), and define $$\lambda(X):=(j,\mbox{alt}(X)).$$
\item if $\mbox{alt}(X)\geq p(k-1)+1$: in the longest alternating subsequence of non-zero terms of $X$, at least one of the elements of $Z_p$ appears at least $k$ times; hence, in at least one of the $X_j$ there is an element $S$ of $V(n,k,2)$; choose the smallest such $S$ (according to $\preceq$).
Let $j$ be such that $S\subseteq X_j$ and define $$\lambda(X):=(j,c(S)+p(k-1)).$$
\end{itemize}

$\lambda$ is $Z_p$-equivariant map from $(Z_p\cup\{0\})^{n}\setminus\{(0,\ldots,0)\}$ to $Z_p\times [m]$. 

Let $X^{(1)}\subseteq X^{(2)}\in(Z_p\cup\{0\})^{n}\setminus\{(0,\ldots,0)\}$. If $\lambda_2(X^{(1)})=\lambda_2(X^{(2)})\leq\alpha$, then the longest alternating subsequences of non-zero terms of $X^{(1)}$ and $X^{(2)}$ have same size. Clearly, the first non-zero terms of $X^{(1)}$ and $X^{(2)}$ are equal.

Let
$X^{(1)}\subseteq X^{(2)}\subseteq \ldots \subseteq X^{(p)}\in(Z_p\cup\{0\})^{n}\setminus\{(0,\ldots,0)\}$. If $\lambda_2(X^{(1)})=\lambda_2(X^{(2)})=\ldots=\lambda_2(X^{(p)})\geq\alpha+1$, then for each $i\in[p]$ there is $S_i\in V(n,k,2)$ and $j_i\in[p]$ such that we have $S_i\subseteq X_{j_i}^{(i)}$ and $\lambda_2(X^{(i)})=c(S_i)$. If all $\lambda_1(X^{(i)})$ would be distinct, then it would mean that all $j_i$ would be distinct, which implies that the $S_i$ would be disjoint but colored with the same color, which is impossible since $c$ is a proper coloring.

We can thus apply the $Z_p$-Tucker lemma (Lemma \ref{lem:zptucker}) and conclude that $n\leq p(k-1)+C(p-1)$, that is 
$$C\geq\left\lceil\frac{n-(k-1)p}{p-1}\right\rceil.$$
\end{proof}

To prove Corollary \ref{cor:gen}, we prove the following lemma, both statement and proof of which are inspired by Lemma 3.3 of \cite{AlDrLu09}.

\begin{lemma}\label{lem:gen}
Let $r_1,r_2,s_1,s_2$ be non-negative integers $\geq 1$, and define $r=r_1r_2$ and $s=s_1s_2$.

Assume that for $i=1,2$ we have $\chi\left(KG^{r_i}{{[n]}\choose k}_{ s_i\mbox{\tiny{\textup{-stab}}}}^{\displaystyle\sim}\right)=\left\lceil\frac{n-(k-1)r_i}{r_i-1}\right\rceil$ for all integers $n$ and $k$ such that $n\geq r_ik$.

Then we have $\chi\left(KG^{r}{{[n]}\choose k}_{ s\mbox{\tiny{\textup{-stab}}}}^{\displaystyle\sim}\right)=\left\lceil\frac{n-(k-1)r}{r-1}\right\rceil$ for all integers $n$ and $k$ such that $n\geq rk$.
\end{lemma}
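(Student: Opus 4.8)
The plan is to prove the two inequalities separately. The upper bound $\chi\left(KG^{r}{{[n]}\choose k}_{ s\mbox{\tiny{\textup{-stab}}}}^{\displaystyle\sim}\right)\leq\left\lceil\frac{n-(k-1)r}{r-1}\right\rceil$ is immediate and uses neither hypothesis: the almost $s$-stable hypergraph is an induced subhypergraph of the full Kneser hypergraph $KG^{r}{{[n]}\choose k}$ (its edges are again $r$-tuples of pairwise disjoint $k$-sets, only its vertex set has shrunk to $V(n,k,s)$), so any proper coloring of $KG^{r}{{[n]}\choose k}$ restricts to a proper coloring of the sub-hypergraph. Hence its chromatic number is at most $\chi\left(KG^{r}{{[n]}\choose k}\right)$, which equals the claimed value by the Alon--Frankl--Lov\'asz theorem together with Erd\H{o}s's explicit coloring recalled in the introduction. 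All the content of the lemma therefore lies in the lower bound, and it is here that the two hypotheses must be combined.

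For the lower bound I would fix a proper coloring $c$ of $KG^{r}{{[n]}\choose k}_{ s\mbox{\tiny{\textup{-stab}}}}^{\displaystyle\sim}$ with color set $[C]$ and aim to prove the single numerical inequality $n\leq (k-1)r+C(r-1)$, which is exactly equivalent to $C\geq\left\lceil\frac{n-(k-1)r}{r-1}\right\rceil$. Following the scheme of Lemma~3.3 of \cite{AlDrLu09}, the idea is to peel off the two factors one at a time using the inflation map $\iota_t\colon i\mapsto t(i-1)+1$, which sends $[N]$ injectively into $[tN]$ and, crucially, turns a $\sigma$-stable $k$-set into a $t\sigma$-stable $k$-set while preserving disjointness. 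Since $s=s_1s_2$, composing copies of $\iota_{s_1}$ and $\iota_{s_2}$ lets me identify $s_1$-stable (respectively $s_2$-stable) configurations in suitably rescaled copies of $[n]$ with genuine $s$-stable configurations on which $c$ is defined.

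The heart of the argument is a two-level reading of $c$. Because both hypotheses are assumed uniformly in the ground-set size and in $k$, I may apply them to auxiliary hypergraphs of my own choosing. At the inner level I would restrict $c$ to the $s$-stable $k$-sets that lie, after rescaling by $s_1$, inside a block behaving like $[m_2]$; this produces a coloring of $KG^{r_2}{{[m_2]}\choose k}_{ s_2\mbox{\tiny{\textup{-stab}}}}^{\displaystyle\sim}$ with at most $C$ colors, so the $r_2$-hypothesis used contrapositively forces, as soon as $m_2>(k-1)r_2+C(r_2-1)$, an $r_2$-tuple of pairwise disjoint monochromatic $s$-stable sets inside that block. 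Recording for each block the common color of such a tuple yields a coloring to which the $r_1$-hypothesis applies at the outer level (via the inflation $\iota_{s_2}$ turning $s_1$-stable selections into $s$-stable ones); coordinating $r_1$ disjoint blocks that share a color then assembles $r_1\cdot r_2=r$ pairwise disjoint monochromatic $s$-stable $k$-sets, contradicting the properness of $c$ unless the ground set was small. Feeding the inner $r_2$-bound into the outer $r_1$-bound and reorganizing through the identities $(k-1)r=(k-1)r_1+(k-1)r_1(r_2-1)$ and $C(r-1)=C(r_1-1)+Cr_1(r_2-1)$ should deliver exactly $n\leq(k-1)r+C(r-1)$, whence the claim after taking ceilings.

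The main obstacle is the coordination step. The two applications of the hypotheses use \emph{different} inflation scales ($s_2$ outside, $s_1$ inside), and one must make these interlock so that stability and disjointness are preserved simultaneously and so that the outer $r_1$-application genuinely sees a proper almost $s_1$-stable $r_1$-Kneser coloring on the correct ground set; extracting from the per-block $r_2$-tuples a \emph{single} color shared by $r_1$ disjoint blocks is where the care is concentrated. Alongside this, the ground-set sizes $m_2$ and the outer parameter $M$ must be chosen so that the two numerical bounds compose to precisely $n\leq(k-1)r+C(r-1)$, including the off-by-one effects hidden in the ceilings. No topology enters here at all: it has been entirely absorbed into the base cases supplied by Theorem~\ref{main}, so this lemma is a purely combinatorial bookkeeping of colorings.
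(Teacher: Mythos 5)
Your high-level plan is the paper's: upper bound by restriction plus the Alon--Frankl--Lov\'asz theorem with Erd\H{o}s's coloring, lower bound by composing the two hypotheses at two nested levels in the style of Lemma~3.3 of \cite{AlDrLu09}, and your arithmetic identities do match the decomposition $(t-1)(r-1)+rk=(t-1)(r_2-1)+r_2\bigl((t-1)(r_1-1)+r_1k\bigr)$ on which the paper's count rests. (Your pairing of inner $(r_2,s_2)$ with outer $(r_1,s_1)$ versus the paper's inner $(r_1,s_2)$ with outer $(r_2,s_1)$ is a harmless relabeling, since only $s=s_1s_2$ matters.) But there is a genuine gap exactly at the step you yourself flag as ``the coordination step'': your outer level is built on a fixed family of inflated \emph{blocks}, and that mechanism cannot deliver the bound. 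A coloring of blocks is a coloring of (images of) intervals, which is not the vertex set of any almost $s_1$-stable Kneser hypergraph, so ``the $r_1$-hypothesis applies at the outer level'' has no object to apply to. If instead you tile $[n]$ by disjoint blocks and extract $r_1$ same-colored blocks, the extraction is mere pigeonhole: you need roughly $M>C(r_1-1)$ blocks each of length about $s_1\bigl((k-1)r_2+C(r_2-1)\bigr)$, giving a bound on $n$ that is \emph{quadratic} in $C$ (with a spurious factor $s_1$), far weaker than the required linear bound $n\leq(k-1)r+C(r-1)$.

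The missing idea is that the hypotheses hold \emph{for all $n$ and $k$} and must be invoked the second time with the cardinality parameter set to a large value, not $k$. The paper sets $n_1=r_1k+(t-1)(r_1-1)$ and considers \emph{every} almost $s_1$-stable $n_1$-subset $A=\{a_1<\dots<a_{n_1}\}$ of $[n]$ --- arbitrary gaps $\geq s_1$, not your rigid inflation $\iota_{s_1}$ with uniform gaps --- and transports $c$ along the order-isomorphism $[n_1]\to A$: for $B\in V(n_1,k,s_2)$ one has $a_j-a_i\geq s_1(j-i)\geq s_1s_2=s$ for $i,j\in B$, so $h(B):=c(\{a_i:\,i\in B\})$ is a $t$-coloring of $KG^{r_1}{{[n_1]}\choose k}_{s_2\mbox{\tiny{\textup{-stab}}}}^{\displaystyle\sim}$, and the choice of $n_1$ forces a monochromatic $r_1$-tuple inside each $A$; recording its common color makes $\tilde h(A)$ well defined for \emph{all} $A$, i.e.\ $\tilde h$ is a $t$-coloring of the Kneser hypergraph $KG^{r_2}{{[n]}\choose n_1}_{s_1\mbox{\tiny{\textup{-stab}}}}^{\displaystyle\sim}$ of \emph{big} $n_1$-subsets. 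Since $n\geq(t-1)(r_2-1)+r_2n_1$, the outer hypothesis (not pigeonhole) yields $r_2$ pairwise disjoint monochromatic $A$'s, and nesting produces $r_1r_2$ pairwise disjoint same-colored elements of $V(n,k,s)$, the desired contradiction with the linear threshold. Replacing your blocks-and-inflations picture by this ``Kneser hypergraph of stable $n_1$-subsets colored via order-isomorphisms'' is not bookkeeping: without it the two numerical bounds do not compose linearly, and your proposal as written does not close.
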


\begin{proof}
Let $n\geq (t-1)(r-1)+rk$. We have to prove that $\chi\left(KG^{r}{{[n]}\choose k}_{ s\mbox{\tiny{\textup{-stab}}}}^{\displaystyle\sim}\right)>t$. For a contradiction,
assume that $KG^r{{[n]}\choose k}_{s\mbox{\tiny{-stab}}}$ is properly colored with $C\leq t$ colors. For $S\in V(n,k,p)$, we denote by $c(S)$ its color. We wish to prove that there are $S_1,\ldots,S_r$ disjoint elements of $V(n,k,s)$ with $c(S_1)=\ldots=c(S_r)$.

Take $A\in V(n,n_1,s_1)$, where $n_1:=r_1k+(t-1)(r_1-1)$. Denote $a_1<\ldots<a_{n_1}$ the elements of $A$ and define $h:V(n_1,k,s_2)\rightarrow[t]$ as follows: let $B\in V(n_1,k,s_2)$; the $k$-subset $S=\{a_i:\,i\in B\}\subseteq[n]$ is an element of $V(n,k,s)$, and gets as such a color $c(S)$; define $h(B)$ to be this $c(S)$.
Since $n_1=r_1k+(t-1)(r_1-1)$, there are $B_1,\ldots,B_{r_1}$ disjoint elements of $V(n_1,k,s_2)$ having the same color by $h$. Define $\tilde{h}(A)$ to be this common color.

Make the same definition for all $A\in V(n,n_1,s_1)$. The map $\tilde{h}$ is a coloring of $KG^{r_2}{{[n]}\choose{n_1}}_{s_1\mbox{\tiny{\textup{-stab}}}}^{\displaystyle\sim}$ with $t$ colors.
Now, note that $$(t-1)(r-1)+rk=(t-1)(r_1r_2-r_2+r_2-1)+r_1r_2k=(t-1)(r_2-1)+r_2((t-1)(r_1-1)+r_1k)$$ and thus that $n\geq(t-1)(r_2-1)+r_2n_1$. Hence, there are $A_1,\ldots,A_{r_2}$ disjoint elements of $V(n,n_1,s_1)$ with the same color. Each of the $A_i$ gets its color from $r_1$ disjoint elements of $V(n,k,s)$, whence there are $r_1r_2$ disjoint elements of $V(n,k,s)$ having the same color by the map $c$.
\end{proof}

\begin{proof}[Proof of Corollary \ref{cor:gen}]
Direct consequence of Theorem \ref{main} and Lemma \ref{lem:gen}.
\end{proof}

\section{Short combinatorial proof of Schrijver's theorem}

Recall that Schrijver's theorem is 
\begin{thm} Let $n\geq 2k$.
$\chi\left(KG{{[n]}\choose k}_{2\tiny{\textup{-stab}}}\right)=n-2k+2.$
\end{thm}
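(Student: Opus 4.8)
The plan is to read Schrijver's theorem as the specialization $p=r=2$ of the circle of ideas behind Theorem~\ref{main}, and to prove the two inequalities separately. The upper bound $\chi\le n-2k+2$ is elementary and needs no topology: color a $k$-set $S$ by $\min\{\min S,\,n-2k+2\}$. Two sets sharing a color $j<n-2k+2$ both have minimum $j$, hence meet; two sets sharing the color $n-2k+2$ both lie in $\{n-2k+2,\dots,n\}$, a ground set of size $2k-1$, so they meet as well. This is a proper coloring with $n-2k+2$ colors even of the whole Kneser graph, so a fortiori of $KG\binom{[n]}{k}_{2\text{-stab}}$.

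For the lower bound I would mimic the proof of Theorem~\ref{main} with $p=2$, $\alpha=2(k-1)$ and $m=2(k-1)+C$, where $C$ is the number of colors of an assumed proper coloring $c$. Identifying $Z_2=\{\omega,\omega^2\}$ with signs, each $X\in(Z_2\cup\{0\})^n\setminus\{\mathbf 0\}$ is a $\pm/0$ vector; I let $\lambda_1(X)$ be the sign of its first nonzero entry, put $\lambda_2(X)=\mathrm{alt}(X)$ when $\mathrm{alt}(X)\le 2(k-1)$, and, when a $k$-set $S$ can be extracted, put $\lambda_2(X)=c(S)+2(k-1)$ with $\lambda_1(X)$ the sign of $S$. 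Feeding this into Lemma~\ref{lem:zptucker} yields $m\ge n$, that is $C\ge n-2k+2$, exactly as for Theorem~\ref{main}.

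The single new difficulty --- and the hard part --- is that Schrijver's vertices must be $2$-stable on the $n$-cycle, whereas the alternating-subsequence extraction used for Theorem~\ref{main} only guarantees distance $\ge2$ in the linear order, i.e.\ membership in $V(n,k,2)$; the two notions differ exactly by the wrap-around pair $\{1,n\}$. The correct high regime is therefore governed by the true condition ``one sign class of $X$ contains a cyclically $2$-stable $k$-set,'' not by $\mathrm{alt}$ alone. When $\mathrm{alt}(X)\ge2k$ such a set always exists, since one may drop an endpoint occurrence to avoid the wrap; but at the threshold $\mathrm{alt}(X)=2k-1$, with the first and last nonzero entries of equal sign and all runs singletons, the unique candidate $k$-set contains both $1$ and $n$, and no cyclically $2$-stable $k$-set is available. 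Such an $X$ must be demoted to the low regime.

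Hence the crux is to build a monotone, $Z_2$-equivariant $\lambda_2$ whose low values fill $\{1,\dots,2(k-1)\}$ and whose high threshold coincides with the existence of a cyclically $2$-stable $k$-set, while $\lambda_1$ (the first-nonzero sign) stays constant along low chains. The tension is intrinsic: plain $\mathrm{alt}$ has the right range but misclassifies the wrapping vectors, whereas a purely cyclic alternation count classifies them correctly but takes only even values, losing half the range and hence a unit in the final bound. I would reconcile the two by a hybrid --- keep $\mathrm{alt}$, detect the wrap through the signs of the first and last nonzero coordinates, and demote only the genuinely wrapping vectors, breaking ties by singling out coordinate $1$. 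I expect most of the work to go into re-checking the first hypothesis of Lemma~\ref{lem:zptucker}, namely that the first-nonzero sign is preserved along a low chain $X^{(1)}\subseteq X^{(2)}$ now that a demoted vector of alternation $2k-1$ may carry the same label as one of alternation $2k-2$. The second hypothesis then goes through verbatim as in Theorem~\ref{main}: distinct values of $\lambda_1$ would force pairwise disjoint, equally colored, cyclically $2$-stable $k$-sets, contradicting properness of $c$; and Lemma~\ref{lem:zptucker} closes the argument.
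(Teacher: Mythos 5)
Your upper bound is fine, and you have correctly located the crux: extracting a $k$-set from an alternating subsequence of length $2k-1$ only yields a member of $V(n,k,2)$, which may contain the wrap-around pair $\{1,n\}$ and hence fail to be a vertex of Schrijver's graph. But your proposed repair --- keep $\alpha=2(k-1)$, i.e.\ threshold $\mathrm{alt}(X)\ge 2k-1$, and ``demote'' the genuinely wrapping vectors into the low regime --- is never actually constructed in your write-up, and in the form you describe it provably cannot work. Concrete obstruction: take $k=2$, $n=4$, $X=(+,-,0,+)$. Then $\mathrm{alt}(X)=3=2k-1$, $X^+=\{1,4\}$ lies in $V(4,2,2)$ but is not cyclically $2$-stable, and $X^-=\{2\}$ is too small, so $X$ is exactly a vector you must demote. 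If you set $\lambda_2(X)=2$, then the subvectors $(+,-,0,0)\subseteq X$ and $(0,-,0,+)\subseteq X$ both have $\mathrm{alt}=2$, hence the same $\lambda_2$, but opposite first signs; if you set $\lambda_2(X)=1$, the same happens with $(+,0,0,0)$ and $(0,-,0,0)$. Either way the first hypothesis of Lemma~\ref{lem:zptucker} fails for every possible choice of $\lambda_1(X)$: a wrapping vector sits above low vectors of both signs at every low level, so there is no consistent place to demote it to. This is not a detail to be ``re-checked''; it is where the plan breaks.

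The paper resolves the tension differently, and in a way that also dodges the unit-loss you were worried about. It raises the threshold: low regime $\mathrm{alt}(X)\le 2k-1$, high regime $\mathrm{alt}(X)\ge 2k$. In the high regime \emph{both} $X^+$ and $X^-$ contain cyclically $2$-stable $k$-sets (from an alternating subsequence of length $2k$, the odd-position class avoids $n$ and the even-position class avoids $1$). One then labels $X$ by $\pm\bigl(c(S)+2k-1\bigr)$, where $S$ has the \emph{smallest} color among the cyclically stable $k$-sets contained in $X^+$ or in $X^-$, the sign recording which side $S$ lies on. The lost unit is recovered not in the low regime but in the high one: the two sides contain disjoint stable sets, which a proper coloring with $C=n-2k+1$ colors must color differently, so the minimum color satisfies $c(S)\le n-2k$. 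Hence the labels occupy only $\pm\{1,\ldots,(2k-1)+(n-2k)\}=\pm\{1,\ldots,n-1\}$, Tucker's lemma (the $p=2$ case of Lemma~\ref{lem:zptucker}) produces a complementary pair $A\subseteq B$ with $\lambda(A)=-\lambda(B)$, and this is ruled out regime by regime: equal $\mathrm{alt}$ forces equal first sign when $A\subseteq B$, and a high-regime complementary pair gives disjoint, equally colored, cyclically stable $k$-sets. Your proposal is missing this idea (or any substitute for it), so as it stands the lower bound is not proved.
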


When specialized for $p=2$, Theorem \ref{main} does not imply Schrijver's theorem since the vertex set is allowed to contain subsets with $1$ and $n$ together. Anyway, by a slight modification of the proof, we can get a short combinatorial proof of Schrijver's theorem. Alternative proofs of this kind -- but not that short -- have been proposed in \cite{Me08,Zi02}

For a positive integer
$n$, we write $\{+,-,0\}^n$ for the set of all {\em signed subsets}
of $[n]$, that is, the family of all pairs $(X^+,X^-)$ of disjoint
subsets of $[n]$. Indeed, for $X\in\{+,-,0\}^n$, we can define
$X^+:=\{i\in[n]: X_i=+\}$ and analogously $X^-$.

We define $X\subseteq Y$ if and only if $X^+\subseteq Y^+$ and $X^-\subseteq Y^-$.

By $\mbox{alt}(X)$ we denote the length of the longest alternating subsequence of non-zero signs in $X$.
For instance: $\mbox{alt}(+0--+0-)=4$, while $\mbox{alt}(--++-+0+-)=5$.

The proof makes use of the following well-known lemma see \cite{Ma03,Tu46,Zi02} (which is a special case of Lemma \ref{lem:zptucker} for $p=2$).

\begin{lemma}[Tucker's lemma]
Let $\lambda: \{-,0,+\}^n\setminus\{(0,0,\ldots,0)\}\rightarrow\{-1,+1,\ldots,-n,+n\}$ be a map such that $\lambda(-X)=-\lambda(X)$. Then there exist $A,B$ in $\{-,0,+\}^n$ such that $A\subseteq B$ and $\lambda(A)=-\lambda(B)$.
\end{lemma}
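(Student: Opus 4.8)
The plan is to obtain Tucker's lemma as the $p=2$ instance of the $Z_p$-Tucker lemma (Lemma~\ref{lem:zptucker}), which is the route suggested by the parenthetical remark accompanying the statement. Identifying $+$ with $\omega$ and $-$ with $\omega^2$ turns $\{-,0,+\}^n$ into $(Z_2\cup\{0\})^n$, and the antipodal condition $\lambda(-X)=-\lambda(X)$ becomes ordinary $Z_2$-equivariance. First I would split each label into a sign and a magnitude, writing $\lambda(X)=(\lambda_1(X),\lambda_2(X))$ with $\lambda_1(X)\in Z_2$ the sign and $\lambda_2(X)$ the absolute value; then $\lambda(-X)=-\lambda(X)$ reads $\lambda_1(-X)=-\lambda_1(X)$ and $\lambda_2(-X)=\lambda_2(X)$, so $\lambda$ has exactly the shape required by Lemma~\ref{lem:zptucker}.

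The core of the argument is to recognize that the absence of a comparable pair $A\subseteq B$ with $\lambda(A)=-\lambda(B)$ is precisely the conjunction of the two hypotheses of Lemma~\ref{lem:zptucker}. I would argue by contradiction and suppose no such pair exists. Since $Z_2$ has only two elements and negation swaps them, the relation $\lambda_1(A)=-\lambda_1(B)$ coincides with $\lambda_1(A)\neq\lambda_1(B)$; hence, on any comparable pair with $\lambda_2(A)=\lambda_2(B)$, we must have $\lambda_1(A)=\lambda_1(B)$. This single fact yields both bullet points at once, for any threshold $\alpha$: below $\alpha$, two comparable elements on the same $\lambda_2$-level keep $\lambda_1$ constant; above $\alpha$, a chain $X^{(1)}\subseteq X^{(2)}$ of length $p=2$ on the same level then has $\lambda_1(X^{(1)})=\lambda_1(X^{(2)})$, so the two values are certainly not pairwise distinct.

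With the hypotheses in place, Lemma~\ref{lem:zptucker} returns $\alpha+(m-\alpha)(p-1)=m\geq n$, and it remains to turn this into a contradiction. This bookkeeping is the one delicate point, and where I expect the real work to lie: the domain $\{-,0,+\}^n\setminus\{0\}$ is the face set of $Z_2^{*n}\cong S^{n-1}$, so the magnitude coordinate must range over fewer than $n$ values for $m\geq n$ to fail. Thus the reduction forces the sign--magnitude encoding to use at most $2(n-1)$ labels, which is exactly the place where the available labels have to be compared against the domain dimension; once this count is carried out, $m\geq n$ is violated and the desired comparable pair with $\lambda(A)=-\lambda(B)$ must exist. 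Should one prefer to sidestep Lemma~\ref{lem:zptucker} altogether, the same conclusion is classical and follows either from the Borsuk--Ulam theorem or from the constructive path-following proof of Tucker's lemma, but the reduction above is the shortest argument consistent with the machinery already developed in this paper.
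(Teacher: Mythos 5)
Your route --- deriving the lemma from Lemma~\ref{lem:zptucker} with $p=2$ --- is exactly the one the paper has in mind: the paper offers no proof of its own, merely citing the lemma as well known and remarking parenthetically that it is the $p=2$ special case of the $Z_p$-Tucker lemma. Your set-up is also correct: identifying signs with $Z_2$, antipodality becomes equivariance, and for $p=2$ the two bullet hypotheses of Lemma~\ref{lem:zptucker} collapse (for any choice of $\alpha$) into the single condition that comparable pairs with equal magnitude $\lambda_2$ have equal sign $\lambda_1$, which is precisely the negation of the conclusion sought.

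The gap is in your final paragraph, and as written it cannot be repaired. The stated range is $\{-1,+1,\ldots,-n,+n\}$, so the magnitude lives in $[m]$ with $m=n$, and Lemma~\ref{lem:zptucker} then yields only $m\geq n$, i.e.\ $n\geq n$: no contradiction. Your assertion that ``the reduction forces the sign--magnitude encoding to use at most $2(n-1)$ labels'' has no basis in the hypotheses; it is exactly the missing count, assumed rather than proved. Indeed, no argument can supply it, because the lemma as printed is false: define $\lambda(X)=x_{i^*}\cdot i^*$, where $i^*$ is the largest index with $x_{i^*}\neq 0$ (sign of that coordinate times its position). This map is antipodal, takes values in $\{\pm 1,\ldots,\pm n\}$, and admits no pair $A\subseteq B$ with $\lambda(A)=-\lambda(B)$: equal magnitudes force the same top index $i^*$, while $A\subseteq B$ forces $a_{i^*}=b_{i^*}$ rather than $a_{i^*}=-b_{i^*}$. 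The classical octahedral Tucker lemma (Matou\v{s}ek, Ziegler) has range $\{\pm 1,\ldots,\pm(n-1)\}$, i.e.\ $m=n-1$, and with that correction your reduction closes immediately, since $n-1\geq n$ is absurd. The corrected statement is also all the paper ever uses: in its proof of Schrijver's theorem the selected color satisfies $c(S)\leq n-2k$, so every label produced there has magnitude at most $n-1$.
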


\begin{proof}[Proof of Schrijver's theorem]

The inequality $\chi\left(KG^2{{[n]}\choose k}_{2\tiny{\textup{-stab}}}\right)\leq n-2k+2$ is easy to prove (with an
explicit coloring) and well-known. So, to obtain a combinatorial
proof, it is sufficient to prove the reverse inequality.

Let us assume that there is a proper coloring $c$ of $KG^2{{[n]}\choose k}_{2\tiny{\textup{-stab}}}$ with $n-2k+1$ colors. We define the following map $\lambda$ on $\{-,0,+\}^n\setminus\{(0,0,\ldots,0)\}$. 
\begin{itemize}
\item if $\mbox{alt}(X)\leq 2k-1$, we define $\lambda(X)=\pm \mbox{alt}(X)$, where the sign is determined by the first sign of the longest alternating subsequence of $X$ (which is actually the first non zero term of $X$).
\item if $\mbox{alt}(X)\geq 2k$, then $X^+$ and $X^-$ both contain a stable subset of $[n]$ of size $k$. Among all stable subsets of size $k$ included in $X^-$ and $X^+$, select the one having the smallest color. Call it $S$. Then define $\lambda(X)=\pm (c(S)+2k-1)$ where the sign indicates which of $X^-$ or $X^+$ the subset $S$ has been taken from. Note that $c(S)\leq n-2k$.
\end{itemize}

The fact that for any $X\in\{-,0,+\}^n\setminus\{(0,0,\ldots,0)\}$ we have $\lambda(-X)=-\lambda(X)$ is obvious. $\lambda$ takes its values in $\{-1,+1,\ldots,-n,+n\}$.
Now let us take $A$ and $B$ as in Tucker's lemma, with $A\subseteq B$ and $\lambda(A)=-\lambda(B)$. We cannot have $\mbox{alt}(A)\leq 2k-1$ since otherwise we will have a longest alternating in $B$ containg the one of $A$, of same length but with a different sign.
Hence $\mbox{alt}(A)\geq 2k$. Assume w.l.o.g. that $\lambda(A)$ is defined by a stable subset $S_A\subseteq A^-$. Then the stable subset $S_B$ defining $\lambda(B)$ is such that $S_B\subseteq B^+$, which implies that $S_A\cap S_B=\emptyset$. We have moreover $c(S_A)=|\lambda(A)|=|\lambda(B)|=c(S_B)$, but this contradicts the fact that $c$ is proper coloring of $KG^2{{[n]}\choose k}_{2\tiny{\textup{-stab}}}$.
\end{proof}

\section{Concluding remarks}

We have seen that one of the main ingredients is the notion of alternating sequence of elements in $Z_p$. Here, our notion only requires that such an alternating sequence must have $x_i\neq x_{i+1}$.
To prove Conjecture \ref{conj:zie}, we need probably something stronger. For example, a sequence is said to be alternating if any $p$ consecutive terms are all distinct. Anyway, all our attempts to get something through this approach have failed.

Recall that Alon, Drewnowski and \L ucsak \cite{AlDrLu09} proved Conjecture \ref{conj:zie} when $r$ is a power of $2$. With the help of a computer and \texttt{lpsolve}, we check that Conjecture \ref{conj:zie} is moreover true for 
\begin{itemize}
\item $n\leq 9$, $k=2$, $r=3$.
\item $n\leq 12$, $k=3$, $r=3$.
\item $n\leq 14$, $k=4$, $r=3$.
\item $n\leq 13$, $k=2$, $r=5$.
\item $n\leq 16$, $k=3$, $r=5$.
\item $n\leq 21$, $k=4$, $r=5$.
\end{itemize}

\bibliographystyle{amsplain}
\bibliography{Combinatorics}
\end{document}